\newcommand{\cS}{\mathcal{S}}
\newcommand{\cK}{\mathcal{K}}
\newcommand{\cKH}{\cK_\mathcal{H}}
\newcommand{\cKR}{\cK_\mathcal{R}}
\newcommand{\cA}{\mathcal{A}}
\newcommand{\minuszero}{\setminus \{0\}}
\newcommand{\dual}[1]{#1^*}
\newcommand{\J}[1]{\partial f(#1)}
\newcommand{\innerproduct}[2]{{#1}^T #2}
\newcommand{\outerproduct}[2]{#1 {#2}^T}
\newcommand{\posvector}{\ensuremath{p}}
\newcommand{\posmatrix}{\ensuremath{P}}
\newcommand{\primalvector}{\ensuremath{r}}
\newcommand{\dualvector}{\ensuremath{h}}
\newcommand{\domright}{\ensuremath{\bar{r}}}
\newcommand{\domleft}{\ensuremath{\bar{h}}}
\newcommand{\arbdomright}{\ensuremath{\hat{r}}}
\newcommand{\arbdomleft}{\ensuremath{\hat{h}}}
\newcommand{\orienteddomright}{\ensuremath{\tilde{r}}}
\newcommand{\orienteddomleft}{\ensuremath{\tilde{h}}}
\newcommand{\orienteddomrights}{\ensuremath{\tilde{R}}}
\newcommand{\orienteddomlefts}{\ensuremath{\tilde{H}}}
\newcommand{\timestep}{\ensuremath{\tau}}
\newcommand{\widening}{\ensuremath{w}}
\newcommand{\funcsysandtime}[1]{\ensuremath{#1_{A_i, \timestep_i}}}
\newcommand{\discretized}{\ensuremath{\funcsysandtime{\tilde{A}}}}
\newcommand{\widenednaive}{\ensuremath{\funcsysandtime{W}}}
\newcommand{\funcsysandtimeandwidening}[1]{\ensuremath{#1_{A_i, \timestep_i, \widening_i}}}
\newcommand{\widened}{\ensuremath{\funcsysandtimeandwidening{W}}}
\newcommand{\cW}{\ensuremath{\mathcal{W}_{\cA,\timestep,\widening}}}
\DeclareMathOperator{\interior}{int}
\DeclareMathOperator{\conichull}{conic-hull}
\newtheorem{proposition}{Proposition}
\newtheorem{definition}{Definition}
\newtheorem{remark}{Remark}
\title{\LARGE \bf
Finding cones for K-cooperative systems
}
\author{Dimitris Kousoulidis and Fulvio Forni%
\thanks{D. Kousoulidis is supported by the Engineering and Physical Sciences Research Council (EPSRC) of the United Kingdom.}%
\thanks{D. Kousoulidis and F. Forni are with the Department of Engineering, University of Cambridge, CB2 1PZ, UK {\tt\small dk483@eng.cam.ac.uk \& f.forni@eng.cam.ac.uk}}
}
\begin{document}

\maketitle
\thispagestyle{empty}
\pagestyle{empty}

%%%%%%%%%%%%%%%%%%%%%%%%%%%%%%%%%%%%%%%%%%%%%%%%%%%%%%%%%%%%%%%%%%%%%%%%%%%%%%%%
\begin{abstract}
We design and test a cone finding algorithm to robustly address nonlinear system analysis through differential positivity.
The approach provides a numerical tool to study multi-stable systems, beyond Lyapunov analysis. 
The theory is illustrated on
two examples: a consensus problem with some repulsive interactions and second order agent dynamics, and a controlled duffing oscillator.
\end{abstract}

%%%%%%%%%%%%%%%%%%%%%%%%%%%%%%%%%%%%%%%%%%%%%%%%%%%%%%%%%%%%%%%%%%%%%%%%%%%%%%%%
\section{INTRODUCTION}

Despite the ubiquity of multi-stable systems (including, for example, bi-stable switches)
in diverse areas of engineering,
there is a lack of general tools for their analysis.
Analyzing their behavior often involves finding ways of using Lyapunov theory confined to the basin of attraction of each equilibrium.
This makes the process cumbersome and leads to local/regional stability results.
A similar situation is encountered when trying to apply traditional Lyapunov stability theory to mono-stable scenarios where the equilibrium state depends on the parameters.
Differential analysis \cite{forni_differential_2014} overcomes these limitations by using the linearized dynamics to characterize the behavior of a system
independently of the location of the system attractor.
In contraction theory, for example, convergence to a unique fixed point is established 
from the stability of the linearized dynamics 
computed along any system trajectory,
using tools such as matrix measures or Lyapunov inequalities applied to the linearized vector field
\cite{lohmiller_contraction_1998-1,russo_global_2010,simpson-porco_contraction_2014,forni_differential_2014-1}. 

We approach the problem from the perspective of differential positivity \cite{forni_differentially_2016}, 
which extends differential analysis approaches to multi-stable systems.
This theory enables us to reduce the analysis of the system to the problem of finding a cone that is forward invariant under some linear operators.
The existence of a suitable cone guarantees that almost every bounded 
trajectory converges to a fixed point.
In this paper we address this problem algorithmically, proposing a numerical method to build a cone that is invariant (and contracting) for the linearized dynamics of the system.

On linear systems, differential positivity corresponds to classical positivity
\cite{luenberger_introduction_1979,farina_positive_2000,rantzer_scalable_2015}.
In the nonlinear setting, differential positivity shows tight connections with 
monotone systems \cite{smith_monotone_1995, hirsch_chapter_2006,angeli_monotone_2003},
whose salient feature is that their trajectories preserve a partial order on the system 
state space.
The connection builds on the fact that every cone induces a partial order on the system state space and that positivity of the 
linearization entails  order preservation among systems trajectories. In this sense, monotonicity
is an integral version of differential positivity.

This connection with monotone systems is particularly relevant because, traditionally, most of the literature takes monotonicity as an intrinsic property
of a system. However, certifying monotonicity for a system is hard, with basic tests known
for the case of orthant positivity (Metzler Jacobian, conditions based on graph connectivity)
\cite{angeli_detection_2004,angeli_multi-stability_2004,blanchini_structural_2014}. 
In this sense, our algorithm provides a way to establish
monotonicity for systems that so far have remained elusive to the property.

At the most general level, differential positivity is defined with respect to a cone \emph{field} on manifolds and can also capture limit cycle behavior.
In this paper, however, we restrict ourselves to strict variants of differential positivity with constant cone fields.
On vector spaces,
finding a conic set that is forward invariant and contracting for the linearized dynamics has the practical interpretation that rays on the boundary of the cone are mapped into its interior. 
This form of projective contraction, connected to Perron-Frobenius theory \cite{bushell_hilberts_1973}, 
guarantees that the asymptotic behavior of the system is essentially one dimensional
and, for systems on a vector space, leads to trajectories that almost globally converge to some fixed point.

To obtain conditions we can test more easily and to make an explicit distinction with the general theory of differential positivity, while maintaining the strong convergence results, we introduce the notion of \emph{strict K-cooperativity}.
The name is picked to highlight the connection with K-cooperative systems \cite{hirsch_chapter_2006}, a large subclass of monotone systems.

Finding a cone that is forward invariant under some linear operators is a difficult task computationally \cite{protasov_when_2010}.
In this paper we derive basic necessary conditions for the existence of such a cone, then we 
leverage the classical theory of positive matrices \cite{berman_nonnegative_1989} to 
algorithmically address its construction.
The algorithm is illustrated and tested
on a consensus dynamics problem with nonlinear
and repulsive interactions, and on a bistable electro-mechanical system. 
We find cones for different parameter values and study their robustness to static uncertainties.
The examples show the potential of our approach in nonlinear analysis.

{
	\small
	\vspace{3mm}
	\textbf{Notation:}
	A matrix \posmatrix\ is non-negative, $\posmatrix \geq 0$  (positive, $\posmatrix > 0$), 
	if all its elements are non-negative (positive).
	The interior of a set $\cS$ is denoted by $\interior(\cS)$.
	A proper cone is a set $\cK$ such that: (i) if $\primalvector_1,\, \primalvector_2 \in \cK \text{ and }0 \leq \posvector_1,\,\posvector_2 \in \mathbb{R}$, then $\posvector_1 \primalvector_1 + \posvector_2 \primalvector_2 \in \cK$; (ii) $\interior(\cK) \neq \{0\}$; and (iii) if $\primalvector \in \cK$, then $-\primalvector \notin \cK$.
	The dual cone of a cone $\cK$ is denoted by $\dual{\cK} = \{\dualvector: \innerproduct{\dualvector}{\primalvector} \geq 0 ,\, \forall \primalvector \in \cK\}$.
	Considering two cones $\cK_1$ and $\cK_2$, $\cK_1 \subseteq \cK_2$ denotes the usual set inclusion. We will use $\cK_1 \subset \cK_2$ to denote $\primalvector \in \cK_1 \minuszero \implies \primalvector \in \interior(\cK_2)$.
}

%%%%%%%%%%%%%%%%%%%%%%%%%%%%%%%%%%%%%%%%%%%%%%%%%%%%%%%%%%%%%%%%%%%%%%%%%%%%%%%%
\section{POSITIVITY AND MONOTONICITY}
\label{sec:background}

\subsection{Positivity}
A linear dynamical system is \emph{positive} if trajectories starting in a cone remain in the cone. 
Namely, a continuous time system $\dot{x} = Ax$, $x\in \mathbb{R}^n$, 
is positive with respect to the proper cone $\cK$ if all its trajectories $x(\cdot)$ satisfy
\begin{equation}
\label{eq:positivity}
x(0) \in \cK \implies x(t) \in \cK \text{ for all } t \geq 0.
\end{equation}
The classical example is given by systems whose state vector is constrained to be element-wise positive. 
The cone corresponds to the positive orthant $\cK = \mathbb{R}^n_+$.

The trajectories of strictly positive systems ($x(0) \in \cK \minuszero \implies x(t) \in \interior(\cK) \text{ for all } t > 0$) 
converge to a ray. This qualitative behavior makes positive systems central in engineering
\cite{luenberger_introduction_1979,farina_positive_2000,rantzer_scalable_2015}. 
Convergence to a ray is a consequence of the Perron-Frobenius 
theorem which states that every strictly positive map $A$ admits a \emph{strictly dominant eigenvalue}, that is, in the continuous time case,
a right-most simple eigenvalue  \cite[Ch. 6]{luenberger_introduction_1979}. 
The  associated eigenvector is the \emph{dominant eigenvector}. The Perron-Frobenius theorem can be seen as a consequence of the Banach contraction 
mapping theorem: for any positive $t>0$, the semiflow $e^{At}$ maps any ray in $\cK$ into the interior of $\cK$, which guarantees
contraction of Hilbert's  metric \cite{bushell_hilberts_1973}. Thus, $e^{At}$ has a fixed point, the dominant eigenvector, lying within $\cK$ \cite{berman_nonnegative_1989, vandergraft_spectral_1968}. 

There are two main approaches to certify strict positivity for closed linear systems: one requires finding a right-most isolated eigenvalue in $A$;
the other searches for a cone that satisfies \eqref{eq:positivity}.
The algorithm we present in Section \ref{sec:cfa} approaches the latter numerically
and, in contrast to other methods, allows for extensions to the nonlinear setting.

\subsection{Monotonicity and K-cooperativity}
A dynamical system is \emph{monotone} if its trajectories preserve some partial ordering $\preceq$ on the system state space.
Namely, for any pair of trajectories $x_1(\cdot)$ and $x_2(\cdot)$, monotonicity requires:
\begin{equation}
x_1(0) \preceq x_2(0) \implies x_1(t) \preceq x_2(t),
\end{equation}
 for any $t \geq 0$. Under mild conditions, almost all bounded trajectories of a monotone system converge to 
 fixed points \cite{hirsch_chapter_2006}, a feature that makes them central in system theory and feedback control \cite{smith_monotone_1995, angeli_monotone_2003, mallet-paret_poincare-bendixson_1990, leenheer_tutorial_2004}.

Our interest in monotone systems stems from the fact that any
proper cone $\cK$ induces a partial order:
\begin{equation}
\label{eq:cipo}
x_1 \preceq_{\cK} x_2 \iff  x_2 - x_1 \in \cK
\end{equation}

Equation \eqref{eq:cipo} suggests a new characterization for monotonicity,
based on system linearization \cite{forni_differentially_2016}:
\begin{definition}[Strict K-cooperativity]
	\label{def:k-coop}
	Let $\cK\subseteq \mathbb{R}^n $ be a proper cone.
	A system $\dot{x} = f(x)$ is strictly K-cooperative with respect to $\mathcal{K}$ if for all $x \in \mathcal{X},\,\delta x \in \cK \minuszero, \,\dualvector \in \dual{\cK} \minuszero,$ the \emph{strict sub-tangentiality} condition is met:
	\begin{equation}
	\innerproduct{\dualvector}{\delta x} = 0 \implies \innerproduct{\dualvector}{\J{x}\delta x} > 0
	\label{eq:strict-subtang}
	\end{equation}
\end{definition}

This implies that, for a strictly K-cooperative system, any trajectory $(x(\cdot),\delta x(\cdot))$ of the prolonged system
\begin{equation}
\dot{x} = f(x) \qquad \dot{\delta x} = \J{x} \delta x,
\label{eq:diff+top}
\end{equation}
satisfies
\begin{equation}
\delta x(0) \in \cK \minuszero
\implies
\delta x(t) \in \mbox{int}(\cK) \mbox{ for } t > 0
\label{eq:diff+}
\end{equation}

\begin{proposition}
Any strictly K-cooperative system is strictly differentially positive.
\end{proposition}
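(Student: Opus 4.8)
The plan is to show that the strict sub-tangentiality condition \eqref{eq:strict-subtang} forces every trajectory $(x(\cdot),\delta x(\cdot))$ of the prolonged system \eqref{eq:diff+top} with $\delta x(0)\in\cK\minuszero$ to satisfy \eqref{eq:diff+}, which is exactly the property characterizing strict differential positivity in the constant-cone-field setting considered here. Fix such a trajectory. Since $\delta x(\cdot)$ solves the linear time-varying ODE $\dot{\delta x}=\J{x(t)}\delta x$ whose coefficient $t\mapsto\J{x(t)}$ is continuous along $x(\cdot)$, uniqueness of solutions gives $\delta x(t)\neq 0$ for all $t$ and $\delta x(\cdot)$ is $C^1$; also $x(t)\in\mathcal{X}$ along the trajectory, so \eqref{eq:strict-subtang} is available at each of its points. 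Throughout I would use the standard supporting-hyperplane description of a proper cone: for $\primalvector\in\cK$ one has $\primalvector\in\interior(\cK)$ iff $\innerproduct{\dualvector}{\primalvector}>0$ for all $\dualvector\in\dual{\cK}\minuszero$, equivalently $\primalvector\in\partial\cK$ iff some $\dualvector\in\dual{\cK}\minuszero$ satisfies $\innerproduct{\dualvector}{\primalvector}=0$.

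\textbf{Step 1 (forward invariance of $\cK$).} First I would show $\delta x(t)\in\cK$ for all $t\geq 0$. Let $T=\sup\{t\geq 0:\delta x(s)\in\cK\text{ for all }s\in[0,t]\}$ and suppose $T<\infty$. Closedness of $\cK$ together with continuity gives $\delta x(s)\in\cK$ for all $s\in[0,T]$; moreover $\delta x(T)\notin\interior(\cK)$, for otherwise $\delta x$ would remain in $\cK$ slightly past $T$, contradicting maximality. Hence $\delta x(T)\in\partial\cK$ and $\delta x(T)\in\cK\minuszero$, so there is $\dualvector\in\dual{\cK}\minuszero$ with $\innerproduct{\dualvector}{\delta x(T)}=0$. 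The scalar function $\phi(t):=\innerproduct{\dualvector}{\delta x(t)}$ is $C^1$, satisfies $\phi(t)\geq 0$ on $[0,T]$ (as $\delta x(t)\in\cK$ and $\dualvector\in\dual{\cK}$) and $\phi(T)=0$, so $T$ minimizes $\phi$ on $[0,T]$ and $\dot\phi(T)\leq 0$; but $\dot\phi(T)=\innerproduct{\dualvector}{\J{x(T)}\delta x(T)}>0$ by \eqref{eq:strict-subtang}, a contradiction. Therefore $T=\infty$. (Equivalently, \eqref{eq:strict-subtang} is precisely the strict Nagumo / Bony--Brezis sub-tangentiality condition for $\cK$ under \eqref{eq:diff+top}, so one could instead invoke that theorem.)

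\textbf{Step 2 (immediate entry into the interior).} Next, suppose for contradiction that $\delta x(t_1)\in\partial\cK$ for some $t_1>0$. Pick $\dualvector\in\dual{\cK}\minuszero$ with $\innerproduct{\dualvector}{\delta x(t_1)}=0$ and set $\phi(t):=\innerproduct{\dualvector}{\delta x(t)}$. By Step 1, $\phi(t)\geq 0$ on $[0,t_1]$ and $\phi(t_1)=0$, so $t_1$ minimizes $\phi$ over $[0,t_1]$ and therefore $\dot\phi(t_1)\leq 0$. On the other hand $\delta x(t_1)\in\cK\minuszero$, $\dualvector\in\dual{\cK}\minuszero$, and $\innerproduct{\dualvector}{\delta x(t_1)}=0$, so \eqref{eq:strict-subtang} gives $\dot\phi(t_1)=\innerproduct{\dualvector}{\J{x(t_1)}\delta x(t_1)}>0$, a contradiction. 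Hence no prolonged trajectory reaches $\partial\cK$ at a positive time, and together with Step 1 this yields $\delta x(t)\in\cK\setminus\partial\cK=\interior(\cK)$ for every $t>0$; this is exactly \eqref{eq:diff+}, so the system is strictly differentially positive.

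\textbf{Main obstacle.} The delicate step is Step 1: upgrading the pointwise inequality \eqref{eq:strict-subtang} to genuine set invariance of $\cK$ along solutions. The supremum argument above goes through precisely because strictness removes the borderline case that makes the general Nagumo theorem subtle. Secondary technical points to pin down are the regularity claims --- continuity of $t\mapsto\J{x(t)}$, hence $\delta x(\cdot)\neq 0$ and $\phi\in C^1$ --- and the observation that \eqref{eq:strict-subtang} need only be tested on boundary directions, since no $\dualvector\in\dual{\cK}\minuszero$ annihilates an interior point, so nothing must be checked there.
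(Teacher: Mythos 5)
Your proof takes a genuinely different --- and much more substantive --- route than the paper's. The paper's entire proof is one sentence: it takes the implication \eqref{eq:diff+} as already established (it is asserted without proof just before the proposition) and simply observes that \eqref{eq:diff+top} together with \eqref{eq:diff+} is exactly \cite[Definition 2]{forni_differentially_2016} for every $T>0$. You instead prove the missing step, namely that the pointwise strict sub-tangentiality condition \eqref{eq:strict-subtang} forces \eqref{eq:diff+} along solutions of the prolonged system. Your Step 2 (a boundary point at $t_1>0$ would make $t_1$ a minimizer of $\phi(t)=\innerproduct{\dualvector}{\delta x(t)}$ over $[0,t_1]$, forcing $\dot\phi(t_1)\le 0$ against \eqref{eq:strict-subtang}) is correct, as are the reduction to boundary directions via supporting hyperplanes and the nonvanishing of $\delta x(\cdot)$ by linearity and uniqueness.

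There is, however, one concrete gap in Step 1. Your supremum argument derives the contradiction from ``$T$ minimizes $\phi$ on $[0,T]$, hence $\dot\phi(T)\le 0$,'' which needs $[0,T]$ to have points to the left of $T$. If $\delta x(0)\in\partial\cK$, the supremum $T$ could a priori equal $0$; then the interval degenerates to a point, no sign information on $\dot\phi(0)$ follows, and the trajectory might in principle leave $\cK$ immediately through a constraint other than the single $\dualvector$ you selected. Since \eqref{eq:diff+} is required for all $\delta x(0)\in\cK\minuszero$, boundary initial conditions must be covered. The standard repair is a compactness argument: let $D_0=\{\dualvector\in\dual{\cK}:\|\dualvector\|=1,\ \innerproduct{\dualvector}{\delta x(0)}=0\}$ be the active normals; by \eqref{eq:strict-subtang}, continuity, and compactness, $\innerproduct{\dualvector}{\J{x(t)}\delta x(t)}>0$ uniformly for $\dualvector$ in a neighborhood of $D_0$ and $t$ small, while the remaining normals satisfy a uniform positive lower bound on $\innerproduct{\dualvector}{\delta x(0)}$; together these give $\delta x(t)\in\cK$ for small $t>0$, i.e.\ $T>0$, after which your argument closes. (Invoking Bony--Brezis/Nagumo outright, as you parenthetically suggest, also covers this case.) With that repair the proof is complete and, unlike the paper's, self-contained.
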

\begin{proof}
From \eqref{eq:diff+top} and \eqref{eq:diff+}, \cite[Definition 2]{forni_differentially_2016} holds for all $T > 0$.
\end{proof}

Strictly K-cooperative systems enjoy strong convergence properties, as shown by the theory of differentially positive systems \cite[Corollary 5]{forni_differentially_2016}:
\begin{proposition}[Steady state of K-cooperative systems]
	Consider a strictly K-cooperative dynamical system $\dot{x} = f(x)$, $x \in \mathbb{R}^n$. 
	Suppose that all trajectories are bounded. Then, for almost all $x \in \mathbb{R}^n$, the $\omega$-limit set $\omega(x)$ is a fixed point.
	\label{prop:conv_main}
\end{proposition}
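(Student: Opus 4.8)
The plan is to reduce the statement to the convergence theory already developed for differentially positive systems, leveraging the Proposition just established. Since the system is strictly K-cooperative with respect to the proper cone $\cK$, that Proposition shows it is strictly differentially positive with respect to the constant cone field $x \mapsto \cK$ on $\mathbb{R}^n$; a constant field is trivially smooth and regular, so it is an admissible cone field in the sense of \cite{forni_differentially_2016}. The substance of the proof is then to verify that the hypotheses of \cite[Corollary 5]{forni_differentially_2016} hold --- boundedness of all trajectories (assumed), strict differential positivity (just shown), and the state space being $\mathbb{R}^n$, which is simply connected and hence carries no topological obstruction that would force a limit cycle --- and to invoke that result.

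If one wants the mechanism rather than a citation, I would argue as follows. Boundedness makes the prolonged flow complete and renders each $\omega$-limit set $\omega(x)$ a nonempty compact invariant set. Strict differential positivity means that for every $t>0$ the variational flow $\partial\phi_t(x)$ maps $\cK\minuszero$ into $\interior(\cK)$; by Birkhoff's contraction theorem this is a strict contraction of Hilbert's metric on $\cK$, so along each trajectory the normalized tangent direction converges to a well-defined Perron-Frobenius direction, producing a continuous Perron-Frobenius vector field on $\omega(x)$. The resulting ``essentially one-dimensional'' structure, via a Poincar\'e-Bendixson-type argument for monotone/differentially positive dynamics, forces $\omega(x)$ to be either a fixed point or a closed orbit. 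A measure-theoretic step then shows that the closed-orbit case is confined to a set of zero Lebesgue measure --- transversality of the order relation prevents an open set of trajectories from sharing a periodic $\omega$-limit set --- leaving a fixed point for almost every $x$.

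The main obstacle is bookkeeping rather than dynamics: one must confirm that the pointwise, non-uniform strict sub-tangentiality condition of Definition \ref{def:k-coop} is precisely the notion of strictness required by \cite[Corollary 5]{forni_differentially_2016}, and that a constant cone field meets whatever smoothness/regularity conditions the general manifold statement imposes on cone fields. Once this matching is done the conclusion is immediate; should a gap appear, the fallback is to reprove the Hilbert-metric contraction and the null-set argument directly on the vector space, where both are standard and the constant cone field removes all curvature-related complications.
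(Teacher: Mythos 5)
Your proposal matches the paper's own (implicit) argument exactly: the paper establishes the result by noting that strict K-cooperativity implies strict differential positivity (its Proposition 1) and then directly invoking \cite[Corollary 5]{forni_differentially_2016}, which is precisely your reduction. The additional mechanism you sketch (Hilbert-metric contraction, Perron--Frobenius vector field, null-set argument) is not in the paper but is a faithful account of what the cited corollary's proof does.
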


Proving the existence of a cone $\cK$ to certify K-cooperativity is a difficult task.
In this paper we propose an algorithm that, given a system, finds a cone $\cK$ that satisfies \eqref{eq:strict-subtang}.
Indeed, by Proposition \ref{prop:conv_main}, the algorithm guarantees that the system's trajectories almost globally converge to some fixed point.

%%%%%%%%%%%%%%%%%%%%%%%%%%%%%%%%%%%%%%%%%%%%%%%%%%%%%%%%%%%%%%%%%%%%%%%%%%%%%%%%
\section{POLYHEDRAL CONES}
\label{sec:testing}

\subsection{Representations, Membership, and Tests}
We work with proper polyhedral cones adopting 
the following representations:
\begin{LaTeXdescription}
	\item[H-representation] for any matrix $H$, 
	\begin{equation}
	\cKH(H) = \{\primalvector: H\primalvector \geq 0 \} \label{eq:h-rep} \
	\end{equation}
	\item[R-representation] for any matrix $R$,
	\begin{equation}
	\cKR(R) = \{\primalvector: \primalvector = R\posvector, \posvector\geq 0\} \label{eq:r-rep} \
	\end{equation}
\end{LaTeXdescription}
Strict inequalities characterize the interior of the cones.
In the H-representation, the rows of $H$ define half-spaces through the origin, with the overall cone given by their intersection. 
In the R-representation, the columns of $R$ are generator rays and the overall cone is their conical hull.
For a proper cone we need at least $n$ independent columns in $R$ and $n$ independent rows in $H$, where $n$ is the dimension of the space.

By the Minkowski-Weyl theorem \cite{fukuda_lecture_2016}, any polyhedral cone admits both representations.
However, in dimensions above three, it can be computationally expensive to convert between the two representations.
From this point onward, we focus on R-representation cones, noting that the analysis can be easily extended to H-representation cones using duality.

\begin{proposition}
	\label{prop:tests}
	Suppose $\cK = \cKR(R)$ for some matrix $R$.
	An $n$ dimensional system $\dot{x} = f(x) ,\, x \in \mathcal{X}$, is strictly K-cooperative with respect to $\cK$ if for all $x \in \mathcal{X}$, 
		there exists a matrix $\posmatrix > 0 $ such that, for some $\alpha$,
		\begin{equation}
		(\alpha I + \J{x})R = R\posmatrix
		\label{eq:diff_r_test}
		\end{equation}
\end{proposition}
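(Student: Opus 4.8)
The plan is to verify the strict sub-tangentiality condition \eqref{eq:strict-subtang} pointwise in $x$, by substituting the factorization \eqref{eq:diff_r_test} into the pairing $\innerproduct{\dualvector}{\J{x}\delta x}$ and then invoking entrywise positivity of \posmatrix.

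First I would record how $\cK$, $\dual{\cK}$, and \eqref{eq:diff_r_test} interact. Since $\cK = \cKR(R)$, every $\delta x \in \cK \minuszero$ can be written as $\delta x = R\posvector$ with $\posvector \geq 0$, and $\delta x \neq 0$ forces $\posvector \neq 0$. Properness of $\cK$ requires $n$ independent columns in $R$, so $R$ has full column rank and $R^T$ is injective; consequently $\dual{\cK} = \cKH(R^T) = \{\dualvector : R^T\dualvector \geq 0\}$, and for any $\dualvector \in \dual{\cK} \minuszero$ the vector $R^T\dualvector$ is non-negative and nonzero.

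Next, fix $x \in \mathcal{X}$, take $\delta x = R\posvector \in \cK \minuszero$ and $\dualvector \in \dual{\cK} \minuszero$, and assume $\innerproduct{\dualvector}{\delta x} = 0$. From \eqref{eq:diff_r_test} we have $\J{x}R = R\posmatrix - \alpha R$, hence $\J{x}\delta x = R\posmatrix\posvector - \alpha R\posvector$, so that
\begin{align*}
\innerproduct{\dualvector}{\J{x}\delta x}
 &= (R^T\dualvector)^T\posmatrix\posvector - \alpha\,(R^T\dualvector)^T\posvector \\
 &= (R^T\dualvector)^T\posmatrix\posvector - \alpha\,\innerproduct{\dualvector}{\delta x}
  = (R^T\dualvector)^T\posmatrix\posvector,
\end{align*}
the last equality using the hypothesis $\innerproduct{\dualvector}{\delta x} = 0$. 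This is the only place the orthogonality assumption is used, and it is exactly what cancels the spectral shift $\alpha$, so the value of $\alpha$ (and whether it depends on $x$) is irrelevant.

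Finally, $R^T\dualvector$ and $\posvector$ are both non-negative and nonzero while $\posmatrix > 0$ entrywise, so $(R^T\dualvector)^T\posmatrix\posvector = \sum_{i,j}(R^T\dualvector)_i\,\posmatrix_{ij}\,\posvector_j$ is a sum of non-negative terms containing at least one strictly positive term (pick $i$ with $(R^T\dualvector)_i > 0$ and $j$ with $\posvector_j > 0$). Hence $\innerproduct{\dualvector}{\J{x}\delta x} > 0$, which is \eqref{eq:strict-subtang}, and the claim follows. The derivation is mostly bookkeeping; the only subtle point is guaranteeing that $R^T\dualvector$ is genuinely nonzero, which is precisely where properness of $\cK$ — equivalently, full column rank of $R$ — is needed.
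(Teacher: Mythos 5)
Your proof is correct and follows essentially the same route as the paper: use the orthogonality $\innerproduct{\dualvector}{\delta x}=0$ to trade $\J{x}$ for $\alpha I + \J{x}$, substitute $(\alpha I + \J{x})R = R\posmatrix$, and conclude positivity from $0 \neq \posvector \geq 0$, $0 \neq R^T\dualvector \geq 0$, and $\posmatrix > 0$; you are in fact slightly more explicit than the paper in justifying $R^T\dualvector \neq 0$. One small wording issue: properness gives $R$ rank $n$, i.e.\ full \emph{row} rank (so $R^T$ is injective), not full column rank, since $R$ may have more than $n$ generator columns.
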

\begin{proof}
	Let $\delta x \in \cK \minuszero,\, \dualvector \in \dual{\cK} \minuszero,\, \innerproduct{\dualvector}{\delta x} = 0$, and $\cKR(R)$ is proper.
	By definition, $\delta x = Rp$ for some element-wise positive vector $0 \neq \posvector \geq 0$.
	Note that at least one element of $\posvector$ is strictly greater than zero.

	Then, for any $\alpha \in \mathbb{R}$, $\innerproduct{\dualvector}{\J{x} \delta x} = \innerproduct{\dualvector}{(\alpha I + \J{x}) \delta x} = \innerproduct{\dualvector}{(\alpha I + \J{x}) R \posvector} = \innerproduct{\dualvector}{R \posmatrix\posvector}> 0$ and \eqref{eq:strict-subtang} holds.
	The strict inequality follows from the fact that
	$0 \neq \posvector \geq 0$, $0 \neq \dualvector^T R \geq 0$, and $\posmatrix > 0$. 
\end{proof}

Proposition \ref{prop:tests} provides simple geometric conditions for K-cooperativity
with respect to any polyhedral cone. These conditions need to hold for  every $x \in \mathcal{X}$,
which can be tested numerically through conical relaxations, as shown in Section \ref{sec:conical_approx}. 

\subsection{Conical Relaxation} \label{sec:conical_approx}

The tests in Proposition \ref{prop:tests} can be made numerically tractable by finding a finite family of matrices $\cA := \{A_1, A_2, \dots, A_k\}$ such that, for all $x \in \mathcal{X}$:

\begin{equation}
\J{x} \in \conichull(\cA) \minuszero, \label{eq:coni-cond}
\end{equation}
where, for finite $k$:
$$
\conichull(\cA) := \left\{ A : A = \sum_{i=1}^k \posvector_i A_i ,\, \posvector_i \geq 0\right\}
$$

A set $\cA$ that satisfies \eqref{eq:coni-cond} is referred to as a \emph{conical relaxation} of the dynamics $\dot{x} = f(x)$.

\begin{proposition}
	Suppose $\cK = \cKR(R)$ for some matrix $R$.
	An $n$ dimensional system $\dot{x} = f(x) ,\, x \in \mathcal{X}$, is strictly K-cooperative with respect to $\cK$ if \eqref{eq:coni-cond} holds and, for all $A_i \in \cA$, 
	there exists a matrix $\posmatrix_i > 0$ such that
		\begin{equation}
		(\alpha_i I + A_i)R = R\posmatrix_i
		\label{eq:coni_r_test}
		\end{equation}
	for some $\alpha_i \in \mathbb{R}$.
	\label{prop:coni_tests}
\end{proposition}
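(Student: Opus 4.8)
The plan is to reduce Proposition~\ref{prop:coni_tests} to Proposition~\ref{prop:tests} by showing that, under assumptions \eqref{eq:coni-cond} and \eqref{eq:coni_r_test}, the hypothesis of Proposition~\ref{prop:tests} holds at every $x \in \mathcal{X}$. So fix an arbitrary $x \in \mathcal{X}$. By \eqref{eq:coni-cond} we may write $\J{x} = \sum_{i=1}^k \posvector_i A_i$ with all $\posvector_i \geq 0$, and since $\J{x} \in \conichull(\cA)\minuszero$ the coefficient vector is nonzero, i.e. at least one $\posvector_i > 0$. The goal is to produce a scalar $\alpha$ and a matrix $\posmatrix > 0$ with $(\alpha I + \J{x})R = R\posmatrix$.

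The natural candidate is $\alpha := \sum_{i=1}^k \posvector_i \alpha_i$ and $\posmatrix := \sum_{i=1}^k \posvector_i \posmatrix_i$. First I would check the algebraic identity: using linearity,
\begin{equation}
(\alpha I + \J{x})R = \sum_{i=1}^k \posvector_i (\alpha_i I + A_i) R = \sum_{i=1}^k \posvector_i R \posmatrix_i = R\Big(\sum_{i=1}^k \posvector_i \posmatrix_i\Big) = R\posmatrix,
\end{equation}
so \eqref{eq:diff_r_test} is satisfied with this choice. Then I would verify the strict positivity $\posmatrix > 0$: each $\posmatrix_i > 0$ has strictly positive entries, each $\posvector_i \geq 0$, and at least one $\posvector_i$ is strictly positive, so every entry of $\sum_i \posvector_i \posmatrix_i$ is a sum of non-negative terms with at least one strictly positive term, hence strictly positive. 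Therefore $\posmatrix > 0$. Having exhibited such $\alpha$ and $\posmatrix$ for the arbitrary point $x$, Proposition~\ref{prop:tests} applies and the system is strictly K-cooperative with respect to $\cK$.

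This argument is essentially a routine convexity/linearity computation, so there is no serious obstacle; the only point that needs care is the observation that $\conichull(\cA)\minuszero$ excludes the zero matrix, which is precisely what forces at least one $\posvector_i > 0$ and in turn guarantees the \emph{strict} positivity of $\posmatrix$ rather than mere non-negativity. (If the zero matrix were allowed, one could have all $\posvector_i = 0$ and $\posmatrix = 0$, which would not certify the strict sub-tangentiality condition.) I would also note in passing that the scalars $\alpha_i$ need not have any sign constraint, so the combination $\alpha$ is a well-defined real number regardless, and that $R$ having at least $n$ independent columns (properness of $\cK$) is already part of the standing hypothesis $\cK = \cKR(R)$ being a proper cone, so nothing extra is needed there.
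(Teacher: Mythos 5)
Your proposal is correct and follows essentially the same route as the paper: decompose $\J{x}$ conically over $\cA$, take $\alpha$ and $\posmatrix$ as the corresponding non-negative combinations of the $\alpha_i$ and $\posmatrix_i$, and invoke Proposition~\ref{prop:tests}. Your explicit remark that excluding the zero matrix in \eqref{eq:coni-cond} is what guarantees at least one strictly positive coefficient, and hence $\posmatrix > 0$, is exactly the point the paper relies on (stated there as $\sum_i \posvector_i^x \neq 0$).
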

\begin{proof}
	Since \eqref{eq:coni-cond} holds, $\J{x}$ can be written as:
	\begin{equation*}
	\J{x} = \sum_{i=1}^k \posvector_i^x A_i,
	\end{equation*}
	for some $\posvector_i^x \geq 0$ and $\sum_{i=1}^k \posvector_i^x \neq 0$. 

	Define $\alpha := \sum_{i=1}^k \alpha_i \posvector_i^x$ and $\posmatrix := \sum_{i=1}^k \posvector_i^x \posmatrix_i$, and note that $\posmatrix > 0$. Then:
	\begin{align*}
	& (\alpha I + \J{x}) R  = \left(\sum_{i=1}^k \alpha_i \posvector_i^x I + \sum_{i=1}^k \posvector_i^x A_i \right)R = \\
	&= \sum_{i=1}^k \posvector_i^x (\alpha_i I + A_i) R = \sum_{i=1}^k \posvector_i^x R\posmatrix_i 
	 = R\sum_{i=1}^k \posvector_i^x\posmatrix_i = R\posmatrix
	\end{align*}
	Thus, strict K-cooperativity follows from Proposition \ref{prop:tests}.
\end{proof}

Proposition \ref{prop:coni_tests} shows that
K-cooperativity with respect to a given polyhedral cone can be determined by solving $k$ Linear Programming (LP) problems. 
However, some methods of building the family of matrices $\cA$ lead to a combinatorial explosion in the number of matrices to test.
General methods for producing tight conical relaxations are left as future work.

Proposition \ref{prop:coni_tests} also provide ways to test K-cooperativity for systems subject to uncertainties. Suppose that \eqref{eq:coni_r_test} holds for $\dot{x} = f(x)$.
Then it also holds for any perturbed system $\dot{x} = f(x) + g(x)$ where $\partial g(x) \in \conichull(\cA)$.
Moreover, in a situation where the uncertainties in the system can be represented in the linearizations by $\J{x} + \mathcal{Q}$, where $\mathcal{Q}$ is a given family of perturbations, the family of matrices $\cA_\mathcal{Q}$ can be suitably adapted to the perturbations. If $\J{x} + Q \in \conichull(\cA_\mathcal{Q})$, then \eqref{eq:coni_r_test} with $\cA_\mathcal{Q}$ guarantees strict K-cooperativity of the perturbed system.

Given a conical relaxation, testing K-cooperativity with respect to a given cone is a tractable problem. However, deciding whether or not a system is K-cooperative with respect to \emph{some} cone is a much harder question, even when a conical relaxation is used \cite{protasov_when_2010}.
The work in Section \ref{sec:finding_cones} is a first attempt in this direction.

%%%%%%%%%%%%%%%%%%%%%%%%%%%%%%%%%%%%%%%%%%%%%%%%%%%%%%%%%%%%%%%%%%%%%%%%%%%%%%%%
\section{FINDING CONES} \label{sec:finding_cones}

\subsection{Necessary Conditions} \label{sec:nec}
We present two necessary conditions for the existence of a cone that satisfies \eqref{eq:coni_r_test}. The first is a spectral condition that can be verified for every $A_i \in \cA$ individually, summarized in the following Proposition:

\begin{proposition}[Spectral Condition]
	\eqref{eq:coni_r_test} can hold for a given $\cA$ only if every $A_i \in \cA$ has a strictly dominant eigenvalue.
	\label{prop:spectral}
\end{proposition}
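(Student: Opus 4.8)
The plan is to fix a single $A_i \in \cA$ together with the data $R$, $\posmatrix_i > 0$, $\alpha_i \in \mathbb{R}$ furnished by \eqref{eq:coni_r_test}, and to transport the Perron--Frobenius structure of the strictly positive matrix $\posmatrix_i$ onto $\alpha_i I + A_i$ (hence onto $A_i$, since the two differ only by a multiple of the identity). First I would record what $\posmatrix_i > 0$ gives us: $\posmatrix_i$ is square, and by the strong Perron--Frobenius theorem it has a \emph{simple} Perron root $\rho_i > 0$ with a strictly positive eigenvector $v_i$, while every other eigenvalue $\mu$ of $\posmatrix_i$ satisfies $|\mu| < \rho_i$.

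Next I would exhibit the candidate dominant eigenvector of $A_i$ explicitly. From $(\alpha_i I + A_i)R = R\posmatrix_i$ we get $(\alpha_i I + A_i)(R v_i) = R\posmatrix_i v_i = \rho_i\, R v_i$, so it only remains to check $R v_i \neq 0$. Since $v_i > 0$, the vector $R v_i$ lies in $\interior(\cKR(R))$ (strict inequalities characterise the interior of an R-representation cone), and the interior of a proper cone cannot contain $0$; hence $R v_i \neq 0$. Therefore $\lambda_i := \rho_i - \alpha_i$ is a \emph{real} eigenvalue of $A_i$ with eigenvector $R v_i \in \interior(\cKR(R))$.

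The crux is to show $\lambda_i$ is \emph{strictly dominant} for $A_i$, i.e.\ simple and strictly to the right of every other eigenvalue. Here I would use that $R$ is $n \times m$ with $\operatorname{rank} R = n$ (required for $\cKR(R)$ to be proper), so $R : \mathbb{R}^m \to \mathbb{R}^n$ is surjective. The identity \eqref{eq:coni_r_test} shows $\ker R$ is $\posmatrix_i$-invariant: if $Rw = 0$ then $R\posmatrix_i w = (\alpha_i I + A_i)Rw = 0$. Consequently $\posmatrix_i$ descends to a map on the quotient $\mathbb{R}^m/\ker R$, and via the isomorphism this quotient induces with $\mathbb{R}^n$ through $R$, that map is \emph{similar} to $\alpha_i I + A_i$. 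Hence the spectrum of $\alpha_i I + A_i$, counted with multiplicity, is a sub-multiset of $\operatorname{spec}(\posmatrix_i)$. Combining this with the previous step — which placed $\rho_i$ inside that sub-multiset — simplicity of $\rho_i$ in $\operatorname{spec}(\posmatrix_i)$ forces $\lambda_i$ to be simple, and $\operatorname{Re}(\mu) \le |\mu| < \rho_i$ for all remaining eigenvalues forces every other eigenvalue of $A_i$ to have real part $< \lambda_i$. This is exactly the strictly dominant eigenvalue property.

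I expect the last paragraph to be the main obstacle: the delicate point is not that $A_i$'s spectrum is bounded on the right, but that the Perron eigenvalue genuinely \emph{survives} as an eigenvalue of $A_i$ (so that the dominant eigenvalue actually exists), and getting the multiplicity bookkeeping right in the over-determined case $m > n$ via the $\ker R$-quotient argument. The Perron--Frobenius input and the $R v_i \neq 0$ step are routine by comparison.
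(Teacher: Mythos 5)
Your proof is correct, but it takes a genuinely different route from the paper's. The paper argues indirectly: \eqref{eq:coni_r_test} implies the strict sub-tangentiality condition for each $A_i$ (as in Proposition \ref{prop:tests}), hence the linear system $\dot z = A_i z$ is strictly positive with respect to $\cK$, and it then invokes the general theory of cone-positive operators (Theorems 4.3.37 and 4.3.41 of \cite{berman_nonnegative_1989}) to conclude that a strictly dominant eigenvalue exists. You instead work directly with the algebraic certificate: Perron's theorem applied to the strictly positive matrix $\posmatrix_i$ gives a simple Perron root $\rho_i$ with positive eigenvector $v_i$, the intertwining relation pushes this forward to the eigenpair $(\rho_i,\,Rv_i)$ of $\alpha_i I + A_i$ with $Rv_i \in \interior(\cKR(R)) \neq \{0\}$, and the $\ker R$-invariance/quotient argument shows $\alpha_i I + A_i$ is similar to the map induced by $\posmatrix_i$ on $\mathbb{R}^m/\ker R$, so its spectrum is a sub-multiset of $\operatorname{spec}(\posmatrix_i)$; simplicity and strict dominance then follow from $|\mu| < \rho_i$ for the remaining eigenvalues. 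The multiplicity bookkeeping you flag as delicate is in fact sound: in a basis adapted to the invariant subspace $\ker R$, $\posmatrix_i$ is block triangular with the quotient map as one diagonal block, and surjectivity of $R$ (rank $n$, needed for properness of $\cKR(R)$) makes the quotient isomorphic to $\mathbb{R}^n$. What your approach buys is a self-contained, elementary proof that additionally identifies the dominant eigenvalue explicitly as $\rho(\posmatrix_i) - \alpha_i$ and locates the dominant eigenvector $Rv_i$ in $\interior(\cK)$ — so it yields the right-eigenvector half of Proposition \ref{prop:geo_ind} as a free by-product; what the paper's approach buys is brevity and applicability beyond polyhedral cones, since it never uses the R-representation structure, only strict positivity of the flow.
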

\begin{proof}
	As in the proof of Proposition \ref{prop:tests}, \eqref{eq:coni_r_test} implies that every $A_i \in \cA$ satisfies the strict sub-tangentiality condition with respect to $\cK$.

	This implies that a system $\dot{z} = A_i z$ is strictly positive with respect to $\cK$, which in turn requires $A_i$ to have a strictly dominant eigenvalue (see \cite[Theorems 4.3.37 and 4.3.41]{berman_nonnegative_1989} for detailed proofs).
\end{proof}

The second is a geometric condition on the compatibility between the invariant spaces of 
each $A_i \in \cA$ based on the following property:

\begin{proposition}
	For any cone $\cK$ that satisfies \eqref{eq:coni_r_test}, every $A_i \in \cA$ must have its dominant right eigenvector $\domright_i$ in $\interior(\cK)$ and its dominant left eigenvector $\domleft_i$ in $\interior(\dual{\cK})$.
	\label{prop:geo_ind}
\end{proposition}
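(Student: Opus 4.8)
The plan is to deduce both claims from the Perron--Frobenius theorem for linear maps that leave a proper cone invariant \cite{vandergraft_spectral_1968}, \cite[Ch.~4]{berman_nonnegative_1989}, applied first to a semiflow map and then to its transpose. First I would record that, exactly as in the proofs of Propositions~\ref{prop:tests} and~\ref{prop:spectral}, condition \eqref{eq:coni_r_test} forces each $A_i \in \cA$ to satisfy the strict sub-tangentiality condition with respect to $\cK$, so that the linear system $\dot z = A_i z$ is strictly positive with respect to $\cK$. Fixing any $t_0 > 0$, this says that $T_i := e^{A_i t_0}$ maps $\cK \minuszero$ into $\interior(\cK)$; that is, $T_i$ is a strongly positive (primitive) linear map for the proper cone $\cK$.

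Next I would identify the relevant eigendata of $T_i$. By Proposition~\ref{prop:spectral}, $A_i$ has a strictly dominant eigenvalue $\lambda_i$, which is necessarily real (a simple, strictly right-most eigenvalue of a real matrix coincides with its conjugate), with right eigenvector $\domright_i$ and left eigenvector $\domleft_i$. Exponentiating, $e^{\lambda_i t_0}$ is a simple eigenvalue of $T_i$, and since $|e^{\lambda t_0}| = e^{(\mathrm{Re}\,\lambda)t_0}$, its modulus strictly exceeds that of every other eigenvalue of $T_i$; hence $e^{\lambda_i t_0}$ is the spectral radius of $T_i$ and $\domright_i$ its Perron eigenvector. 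The cone Perron--Frobenius theorem applied to the strongly positive map $T_i$ on $\cK$ then places this Perron eigenvector in $\interior(\cK)$, so, after fixing the scaling of $\domright_i$, $\domright_i \in \interior(\cK)$.

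For the left eigenvector I would first transfer strong positivity to the transpose. Using the proper-cone facts that $v \in \interior(\cK)$ if and only if $\innerproduct{\dualvector}{v} > 0$ for all $\dualvector \in \dual{\cK} \minuszero$, and that $\dual{(\dual{\cK})} = \cK$, one checks that for any $\dualvector \in \dual{\cK} \minuszero$ and any $v \in \cK \minuszero$ we have $\innerproduct{(T_i^T \dualvector)}{v} = \innerproduct{\dualvector}{T_i v} > 0$, since $T_i v \in \interior(\cK)$; hence $T_i^T \dualvector \in \interior(\dual{\cK})$; that is, $T_i^T = e^{A_i^T t_0}$ is strongly positive for the proper cone $\dual{\cK}$. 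As $A_i^T$ has the same spectrum as $A_i$, its strictly dominant eigenvalue is again $\lambda_i$, with eigenvector $\domleft_i$; repeating the argument of the previous paragraph with $(T_i^T, \dual{\cK})$ in place of $(T_i, \cK)$ gives $\domleft_i \in \interior(\dual{\cK})$.

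The main obstacle is not deep but lies in the two identifications that make the cone Perron--Frobenius theorem directly applicable: that strong positivity with respect to $\cK$ is inherited by the transpose map with respect to $\dual{\cK}$ (the short computation above), and that the strictly dominant eigenvalue of $A_i$ becomes, after exponentiation by $t_0$, exactly the \emph{simple} Perron root of $T_i$ carrying the \emph{same} eigenvector --- so that ``dominant eigenvector of $A_i$'' and ``Perron eigenvector of $T_i$'' coincide and the cone theorem can be quoted verbatim. Fixing the sign and scaling of the eigenvectors consistently is then routine.
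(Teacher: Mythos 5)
Your proof is correct and follows essentially the same route as the paper: deduce strict positivity of $\dot z = A_i z$ with respect to $\cK$ from \eqref{eq:coni_r_test}, invoke the cone Perron--Frobenius theorem to place the dominant right eigenvector in $\interior(\cK)$, and pass to the transpose/dual cone for the left eigenvector. The only difference is that you prove inline (via the semiflow $e^{A_i t_0}$ and the short duality computation) what the paper simply cites as Theorems 4.3.34 and 4.3.45 of \cite{berman_nonnegative_1989}.
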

\begin{proof}
	Continuing from the proof of Proposition \ref{prop:spectral}, if a system $\dot{z} = A_i z$ is strictly positive with respect to $\cK$, a dominant right eigenvector of $A_i$ must be in $\interior(\cK)$ \cite[Theorem 4.3.34]{berman_nonnegative_1989}.
	Additionally, the dual system $\dot{\eta} = A_i^T \eta$ must be strictly positive with respect to $\dual{\cK}$ \cite[Theorem 4.3.45]{berman_nonnegative_1989}.
	As such, a dominant right eigenvector of $A_i^T$ must be in $\interior(\dual{\cK})$. This is a dominant left eigenvector of $A_i$.
\end{proof}

We observe that the property outlined in Proposition \ref{prop:geo_ind} refers to a common cone $\cK$, therefore,
it can be used to characterize a necessary condition based on the construction of two useful cones $\cK_{inner}(\cA)$ and $\cK_{outer}(\cA)$.
These cones will be used to initialize and terminate our cone finding algorithm (Section \ref{sec:cfa}).
The construction of $\cK_{inner}(\cA)$ and $\cK_{outer}(\cA)$ is detailed below,
building on the `Orientation Trick' of \cite{forni_path-complete_2017}.

\begin{algorithm}[H]
	\caption{Inner and Outer Cones} \label{alg:inner_outer_cones}
	\begin{itemize}
		\item For each $A_i \in \cA$, define $\arbdomright_i$ and $\arbdomleft_i$ as arbitrary orientations of the right and left dominant eigenvectors of $A_i$
		\item We will build two matrices $\orienteddomrights \in \mathbb{R}^{n \times l}$ and $\orienteddomlefts \in \mathbb{R}^{l \times n}$
		\item The first row of $\orienteddomlefts$, $\orienteddomleft_1^T$, is set to $\arbdomleft_1^T$
		\item The $k^{th}$ column of $\orienteddomrights$, $\orienteddomright_k$, is set to either $\arbdomright_k$ or $-\arbdomright_k$ such that $\innerproduct{\orienteddomleft_1}{\orienteddomright_k} \geq 0$
		\item For $j > 1$, the $j^{th}$ row of $\orienteddomlefts$, $\orienteddomleft_j^T$, is set to either $\arbdomleft_j^T$ or $-\arbdomleft_j^T$ such that $\innerproduct{\orienteddomleft_j}{\orienteddomright_j} \geq 0$
		\item $\orienteddomrights$ and $\orienteddomlefts$ then define
		$\cK_{inner}(\cA) := \cKR(\orienteddomrights)$
		and
		$\cK_{outer}(\cA) := \cKH(\orienteddomlefts)$
	\end{itemize}
\end{algorithm}

\begin{proposition}[Geometric Condition]
	\eqref{eq:coni_r_test} can hold for a given $\cA$ only if $\cK_{inner}(\cA) \subset \cK_{outer}(\cA)$
	\label{prop:geo}
\end{proposition}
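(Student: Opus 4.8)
The plan is to fix a cone $\cK = \cKR(R)$ satisfying \eqref{eq:coni_r_test} and to sandwich the two constructed cones around it, namely to prove $\cK_{inner}(\cA) \minuszero \subseteq \interior(\cK)$ and $\cK\minuszero \subseteq \interior(\cK_{outer}(\cA))$. Since $\cK$ is proper, $0 \notin \interior(\cK)$, so $\interior(\cK) \subseteq \cK\minuszero$; chaining the two inclusions then gives $\primalvector \in \cK_{inner}(\cA)\minuszero \implies \primalvector \in \interior(\cK_{outer}(\cA))$, which is exactly $\cK_{inner}(\cA) \subset \cK_{outer}(\cA)$ in the sense of the notation. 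So the whole argument reduces to locating, inside $\cK$ and inside $\dual{\cK}$, the eigenvectors that Algorithm \ref{alg:inner_outer_cones} uses as generators.

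First I would apply Proposition \ref{prop:geo_ind} (with Proposition \ref{prop:spectral} guaranteeing each $A_i$ has a simple, hence uniquely oriented, dominant eigenpair): for every $A_i \in \cA$ there is an orientation $\domright_i \in \interior(\cK)$ of its dominant right eigenvector and an orientation $\domleft_i \in \interior(\dual{\cK})$ of its dominant left eigenvector, and the ``arbitrary'' orientations of the algorithm satisfy $\arbdomright_i = a_i \domright_i$, $\arbdomleft_i = b_i \domleft_i$ for signs $a_i, b_i \in \{+1,-1\}$. The key point is that $\domright_i \in \interior(\cK)$ and $\domleft_j \in \dual{\cK}\minuszero$ force $\innerproduct{\domleft_j}{\domright_i} > 0$ for all $i,j$, so every ``pick the orientation making the pairing nonnegative'' step in the algorithm is in fact forced. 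Propagating these forced choices from the anchor $\orienteddomleft_1 = \arbdomleft_1$, first to every column $\orienteddomright_i$ through $\innerproduct{\orienteddomleft_1}{\orienteddomright_i}\geq 0$ and then to every remaining row $\orienteddomleft_j$ through $\innerproduct{\orienteddomleft_j}{\orienteddomright_j}\geq 0$, a short sign computation gives $\orienteddomright_i = b_1 \domright_i$ and $\orienteddomleft_i = b_1 \domleft_i$ for all $i$, with a single global sign $b_1$. Because $-\cK$ is also a proper cone satisfying \eqref{eq:coni_r_test} (with generator matrix $-R$), while $\cK_{inner}(\cA)$ and $\cK_{outer}(\cA)$ depend only on $\cA$, I may replace $\cK$ by $-\cK$ when $b_1 = -1$; so without loss of generality $\orienteddomright_i \in \interior(\cK)$ and $\orienteddomleft_i \in \interior(\dual{\cK})$ for every $i$.

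The two inclusions then follow from standard cone facts. For the inner cone, $\interior(\cK)$ is a convex cone with $\interior(\cK) + \cK \subseteq \interior(\cK)$, so any nonzero conical combination $\sum_i \posvector_i \orienteddomright_i$ (which has at least one $\posvector_i > 0$) lies in $\interior(\cK)$; hence $\cK_{inner}(\cA) = \cKR(\orienteddomrights)$ satisfies $\cK_{inner}(\cA)\minuszero \subseteq \interior(\cK)$. For the outer cone, each row $\orienteddomleft_j$ of $\orienteddomlefts$ lies in $\interior(\dual{\cK})$ and is therefore strictly positive on $\cK\minuszero$, so every $\primalvector \in \cK\minuszero$ satisfies $\orienteddomlefts \primalvector > 0$ and is thus an interior point of $\cKH(\orienteddomlefts) = \cK_{outer}(\cA)$; hence $\cK\minuszero \subseteq \interior(\cK_{outer}(\cA))$. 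Composing the two inclusions closes the proof. The main obstacle is the sign bookkeeping in the middle step: one must verify that the greedy, sequential orientation choices of the ``Orientation Trick,'' anchored only at $\arbdomleft_1$, can never yield a twisted assignment in which some dominant eigenvectors land in $\interior(\cK)$ (resp.\ $\interior(\dual{\cK})$) while others land in its negative --- and it is precisely the strict positivity of the pairings $\innerproduct{\domleft_j}{\domright_i}$ between correctly oriented eigenvectors that excludes this.
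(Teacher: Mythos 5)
Your proof is correct and follows essentially the same route as the paper: both rest on Proposition \ref{prop:geo_ind}, the strict positivity of all pairings $\innerproduct{\domleft_j}{\domright_k}$ between correctly oriented dominant eigenvectors, and the observation that the orientation trick can only propagate a single global sign. The only difference is the concluding step, where you establish the stronger sandwich $\cK_{inner}(\cA) \subset \cK \subset \cK_{outer}(\cA)$ and chain the inclusions, whereas the paper concludes directly by noting that $\cK_{inner}(\cA) \not\subset \cK_{outer}(\cA)$ would force some pairing $\innerproduct{\orienteddomleft_j}{\orienteddomright_k} \leq 0$, a contradiction.
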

\begin{proof}
	Assume \eqref{eq:coni_r_test}. This implies that Proposition \ref{prop:geo_ind} holds for a given family of matrices $\cA$ and cone $\cK$. 
	Consider the left and right dominant eigenvectors of Proposition \ref{prop:geo_ind}.
	Then $\innerproduct{\domleft_j}{\domright_k} > 0$ for all $(j,k)$.
	
	If $\orienteddomleft_1 = \domleft_1$, by construction $\orienteddomright_k = \domright_k$ and $\orienteddomleft_j = \domleft_j$.
	If $\orienteddomleft_1 = -\domleft_1$, by construction $\orienteddomright_k = -\domright_k$ and $\orienteddomleft_j = -\domleft_j$.
	In both cases $\innerproduct{\domleft_j}{\domright_k} > 0 \iff \innerproduct{\orienteddomleft_j}{\orienteddomright_k} > 0$.
	But if $\cK_{inner}(\cA) \not\subset \cK_{outer}(\cA) $ there exists some $(j,k)$ such that $\innerproduct{\orienteddomleft_j}{\orienteddomright_k} \leq 0$; which leads to a contradiction.
\end{proof}

Note that checking whether $\cK_{inner}(\cA) \subset \cK_{outer}(\cA)$ amounts to computing $\orienteddomlefts$ and $\orienteddomrights$ and verifying that $\orienteddomlefts\orienteddomrights > 0$. As such, it is a very scalable test.

\begin{remark}
Because of the complexity of producing tight conical relaxations and finding cones, these two conditions can be used to quickly rule out K-cooperativity by testing sets of randomly sampled $\J{x}$ of a candidate system.
\end{remark}
                 
\subsection{Cone Finding Algorithm}
\label{sec:cfa} 
A simple procedure to find a cone that satisfies \eqref{eq:coni_r_test} for a given $\cA$ is described below.
We begin by illustrating the main ideas on the easier case of a single matrix $A$.
Given an initial cone $\cKR(R^{(0)})$ and a linear operator $W$, consider the matrix $R^{(1)}$ formed by combining the columns of $R^{(0)}$ and $WR^{(0)}$, $R^{(1)} = [R^{(0)}, WR^{(0)}]$. This can be repeated recursively such that $R^{(k+1)} = [R^{(k)}, WR^{(k)}]$.
If $W := (\alpha I + A)$, $R^{(k+1)}$ will trivially satisfy $(\alpha I + A)R^{(k)} = R^{(k+1)} \posmatrix$ with $\posmatrix \geq 0$.
Moreover, the projective contraction property of positive systems tells us that if $\{A\}$ satisfies Proposition \ref{prop:spectral} and $\cK_{inner}(\{A\}) \subset \cKR(R^{(0)}) \subset \cK_{outer}(\{A\})$,
there will be an $\alpha$ for which the sequence of cones $R^{(k)}$ converges to a cone
$\cKR(R^{(k)}) \to \cK(R)$ in a finite number of steps. It follows that $\cKR(R)$ will satisfy \eqref{eq:coni_r_test} for $\{A\}$ and $\posmatrix \geq 0$.

This motivates an algorithm for finding cones for a family of systems. 
We will first define a \emph{time-step based parametrization}. 
Then, the technical requirement of satisfying \eqref{eq:coni_r_test} with $\posmatrix_i > 0$ is tackled by adding a \emph{widening operation} to $W_i$.
A procedure for \emph{initializing $R^{(0)}$} is also needed.
Finally, unlike the single matrix case, no guarantees about convergence can be made and additional \emph{termination conditions} need to be provided.

\emph{Time-step based parametrization:} 
We denote the dominant eigenvalue of a matrix $\{A_i\}$ satisfying Proposition \ref{prop:spectral} as $\lambda_i$.
The values of $\alpha_i$ for which the single matrix algorithm described above is guaranteed to converge are the values for which $(\alpha_i I + A_i)$ has a real simple positive eigenvalue with a larger absolute value than any other eigenvalue.
Such an eigenvalue will be referred to as an \emph{absolutely dominant eigenvalue} of $(\alpha_i I + A_i)$.
We adopt the parametrization $\discretized := I + \tau_i(A_i - \lambda_i I)$ which corresponds to taking $\alpha_i = 1/\tau_i -\lambda_i$.
The only free parameter in this formulation is $\tau_i$, which acts like a discretization time-step.
For any $A_i$ satisfying Proposition \ref{prop:spectral},
there is a non-empty open interval $\tau_i \in (0, T_i)$ for which \discretized has a dominant eigenvalue.

\emph{Widening operation:}
Setting $\widenednaive := \discretized$ and using $\widenednaive$ to product the sequence of matrices $R^{(k)}$, we have that
$(\alpha_i I + A_i)R^{(k)} = R^{(k+1)} \posmatrix_i$ with $\posmatrix_i \geq 0$. 
However, for the strong convergence properties of strict K-cooperativity we require $\posmatrix_i > 0$.
With this aim, define \widened as:
\begin{equation}
	\widened := \discretized - \widening_i \outerproduct{\orienteddomright_i}{\orienteddomleft_i}, \label{eq:widening}
\end{equation}
where $\tau_i, \widening_i > 0$ and fixed, and $\orienteddomright_i$ and $\orienteddomleft_i$ are the right and left dominant eigenvectors of $A_i$ in $\cK_{inner}(\cA)$ and $\cK_{outer}(\cA)$ respectively.
Now, if a cone $\cKR(R^{(k+1)})$ is formed as $R^{(k+1)} = [R^{(k)}, \widened R^{(k)}]$,
then $\widened R^{(k)} = R^{(k+1)} \bar{\posmatrix}_i$ for some $\bar{\posmatrix}_i \geq 0$.
But, $\cK_{inner}(\cA) \subset \cKR(R^{(k)}) \subset \cK_{outer}(\cA)$, so $\innerproduct{\orienteddomleft_i}{R^{(k)}} > 0$, and 
$\orienteddomright_i = R^{(k+1)}\tilde{\posmatrix}_i$ for some $\tilde{\posmatrix}_i > 0$.
Hence, we get $\discretized R^{(k)} = \widening_i \outerproduct{\orienteddomright_i}{\orienteddomleft_i} + R^{(k+1)} \bar{\posmatrix}_i = R^{(k+1)} \posmatrix_i$ for some $\posmatrix_i > 0$. 
The additional \emph{widening} provided by $- \widening_i \outerproduct{\orienteddomright_i}{\orienteddomleft_i}$  can be interpreted as moving new rays slightly away from $\orienteddomright_i$,
and $\widening_i$ can be thought of as a `widening coefficient'. Given $\tau_i \in (0, T_i)$, there will be a maximum $\widening_i$ for which $\widened$ still has an absolutely dominant eigenvalue.
In what follows, the set of $\widened$ operators is denoted by $\cW$.

Now if a cone $\cKR(R^{(k+1)})$ is formed with $R^{(k+1)} = [R^{(k)}, W_1R^{(k)}, \dots, W_lR^{(k)}]$ 
with $W_i \in \cW$, there must then exist $\alpha_i$ such that $(\alpha_i I + A_i)R^{(k)} = R^{(k+1)}\posmatrix_i$ with $\posmatrix_i > 0$ for all $A_i \in \cA$.
As such, if $R^{(k)}$ converges to some matrix $R$ as $k \to \infty$, then $\cKR(R)$ satisfies \eqref{eq:coni_r_test}.

\emph{Initializing $R^{(0)}$:}
An initial cone $\cKR(R^{(0)})$ that satisfies $\cK_{inner}(\cA) \subset \cKR(R^{(0)}) \subset \cK_{outer}(\cA)$ can be generated by starting with $\cK_{inner}(\cA)$ and adding random vectors of small magnitude until $\cK_{inner}(\cA) \subset \cKR(R^{(0)})$, restarting with a smaller noise magnitude if at any point $\cKR(R^{(0)}) \not\subset \cK_{outer}(\cA)$.
We call this operation Initialize$(\cK_{inner}, \cK_{outer})$.

\emph{Termination conditions:}
The algorithm terminates if \eqref{eq:coni_r_test} is satisfied, which implies strict K-cooperativity. 
This is tested at every iteration.
If after an iteration of the algorithm the cone is no longer in the interior of $\cK_{outer}$, or if the maximum allowed number of iterations has been exceeded, the algorithm terminates without finding a cone.
Algorithm \ref{alg:sat} below summarizes the overall process:

\begin{remark}
	In the implementation of the algorithm we also remove redundant rays in $\cKR(R^{(k)})$ and test \eqref{eq:coni_r_test} every $N$ iterations for improved speed.
\end{remark}

\begin{algorithm}[H]
	{\bf Data:} The set of matrices $\cA$, \\
	the vector of time-steps $\timestep$, \\
	the vector of widening coefficients $\widening$, \\
	the maximum number of iterations $max\_iter$ \smallskip \\
	{\bf Result:} Cone satisfying \eqref{eq:coni_r_test} or $False$ \smallskip \\
	{\bf Procedure:} \\
	Compute $\cK_{inner}$ and $\cK_{outer}$ \, (Algorithm \ref{alg:inner_outer_cones}) \\
	Compute \cW, the collection of \widened in \eqref{eq:widening} \\
	$R^{(0)} := $ Initialize$(\cK_{inner}, \cK_{outer})$ \\
	\texttt{$k$} := 0 \\
	{\bf while }{$k < max\_iter$ and $ \cK_{R}(R^{(k)}) \subset \cK_{outer}$}: \\
	\hspace*{5mm} $R^{(k+1)} = [R^{(k)}, W_1R^{(k)}, \dots, W_lR^{(k)}]$ \, ($W_i \in \cW$) \\
	\hspace*{5mm} {\bf if }{K-cooperative($\cK_{R}(R^{(k+1)})$, $\cA$)}: \, (based on \eqref{eq:coni_r_test}) \\
	\hspace*{10mm} return $\cK_{R}(R^{(k+1)})$ \\
	\hspace*{5mm} $k = k + 1$ \\
	return $False$
	\caption{Cone Finding Algorithm \label{alg:sat}}
\end{algorithm}

%%%%%%%%%%%%%%%%%%%%%%%%%%%%%%%%%%%%%%%%%%%%%%%%%%%%%%%%%%%%%%%%%%%%%%%%%%%%%%%%
\section{EXAMPLES} \label{sec:examples}

\subsection{Consensus}
We consider K-cooperativity of standard consensus 
dynamics \cite{olfati-saber_consensus_2007,ren_distributed_2008} represented by:
\begin{equation}
\dot{x_i} = \sum_{j=1}^N f_{ij}(x_j-x_i) \qquad 0 < i \leq N,
\label{eq:cons_nonlinear} 
\end{equation}
where each agent $x_i$ is modeled by a simple integrator 
driven by the weighted differences with its neighboring agents,
characterized by functions
$f_{ij}: \mathbb{R} \to \mathbb{R}$ such that $f_{ij}(0) = 0$.
The system has a continuum of equilibria $x_e = \alpha 1_N$ 
where $\alpha \in \mathbb{R}$ and 
$1_N$ is the vector of $N$ ones.
The agents reach consensus when $x_1 = \dots = x_N$.

For linear positive weights convergence to consensus is
guaranteed by Perron-Frobenius theory 
\cite{luenberger_introduction_1979,sepulchre_consensus_2010}. 
This result extends to nonlinear strictly increasing weights: their slope
is strictly positive, the system is cooperative, and 
bounded trajectories converge to the consensus equilibria \cite{hirsch_chapter_2006}.
Convergence is also guaranteed for the case of unconstrained linear weights,
whenever $1_N$ is a dominant eigenvector of the system.

The presence of uncertainties or nonlinear weights that are not strictly
increasing makes the problem more challenging. However, from Proposition \ref{prop:conv_main},
we can use K-cooperativity to study consensus problems:
the trajectories of strictly K-cooperative consensus dynamics 
will converge to consensus whenever 
$1_N$ belongs to the interior of $\cK$ \cite[Section V]{forni_differential_2015}.
This demonstrates how Algorithm \ref{alg:sat} provides a numerical tool to study nonlinear and robust
consensus problems away from positive weights.

For illustration, we apply Algorithm \ref{alg:sat} to the network of agents 
in Fig. \ref{fig:cons_graph}, where the black edges represent linear 
weights normalized to $1$, and blue and red edges represents the nonlinear weights
$f_{15}$ and $f_{42}$, respectively.  
Using $A_{a,b}$ to denote the Jacobian of the consensus dynamics in Fig. \ref{fig:cons_graph},
for $f_{15}'=a$ and $f_{42}'=b$, we used Algorithm \ref{alg:sat} to find a common cone $\cK$
for the polytope of linearizations: $$\cA = \{ A_{(1,1)}, A_{(-1,1)}, A_{(1,-1)}, A_{(-0.9,-0.9)}\}$$
This polytope is visualized in Fig. \ref{fig:cons_polytope} and includes, for example:
$-1 \leq f_{15}' \leq 1$ and $f_{42}' = 1$, or $-0.9 \leq f_{15}' \leq 1$ and $-0.9 \leq f_{42}' \leq 1$. 

Consensus is thus reached when uncertainties of magnitude less than one 
affect the weight between nodes $1$ and $5$ or between nodes $4$ and $2$.
Consensus is also reached for any nonlinear weight between these
nodes whose slope is bounded between $-1$ and $1$. Indeed, $f_{15}(x_5-x_1) = \sin(x_5-x_1$)
and $f_{42}(x_2-x_4) = x_2-x_4$ is compatible with consensus. Finally, consensus is reached
when both $f_{15}$ and $f_{42}$ are nonlinear/uncertain provided that they are constrained
to a smaller negative range.

 \begin{figure}[tbp]
	\centering
	\begin{minipage}{.49\linewidth}
		\centering
		\begin{tikzpicture}[x=0.7cm,y=0.9cm]
		\SetUpEdge[lw         = 1.5pt,
		color      = black]
		\GraphInit[vstyle=Normal] 
		\SetGraphUnit{1.4}
		\tikzset{every node/.style={fill=yellow}}
		\tikzset{VertexStyle/.append  style={fill}}
		\Vertex{1}
		\NOEA(1){2}
		\SOEA(1){3}
		\EA(2){4}
		\EA(3){5}
		\tikzset{EdgeStyle/.style={->}}
		\Edge(2)(1)
		\Edge(4)(2)
		\Edge(3)(2)
		\Edge(1)(3)
		\Edge(5)(4)
		\Edge(1)(5)
		\tikzset{EdgeStyle/.style={->, color=blue, bend right}}
		\Edge[label=$f_{15}$, labelcolor=none, labelstyle={above, pos=.4}](5)(1)
		\tikzset{EdgeStyle/.style={->, color=red, bend right}}
		\Edge[label=$f_{42}$, labelcolor=none, labelstyle={below}](2)(4)
		\end{tikzpicture}
	\end{minipage}
\begin{minipage}{.49\linewidth}
	\includegraphics[width=\linewidth]{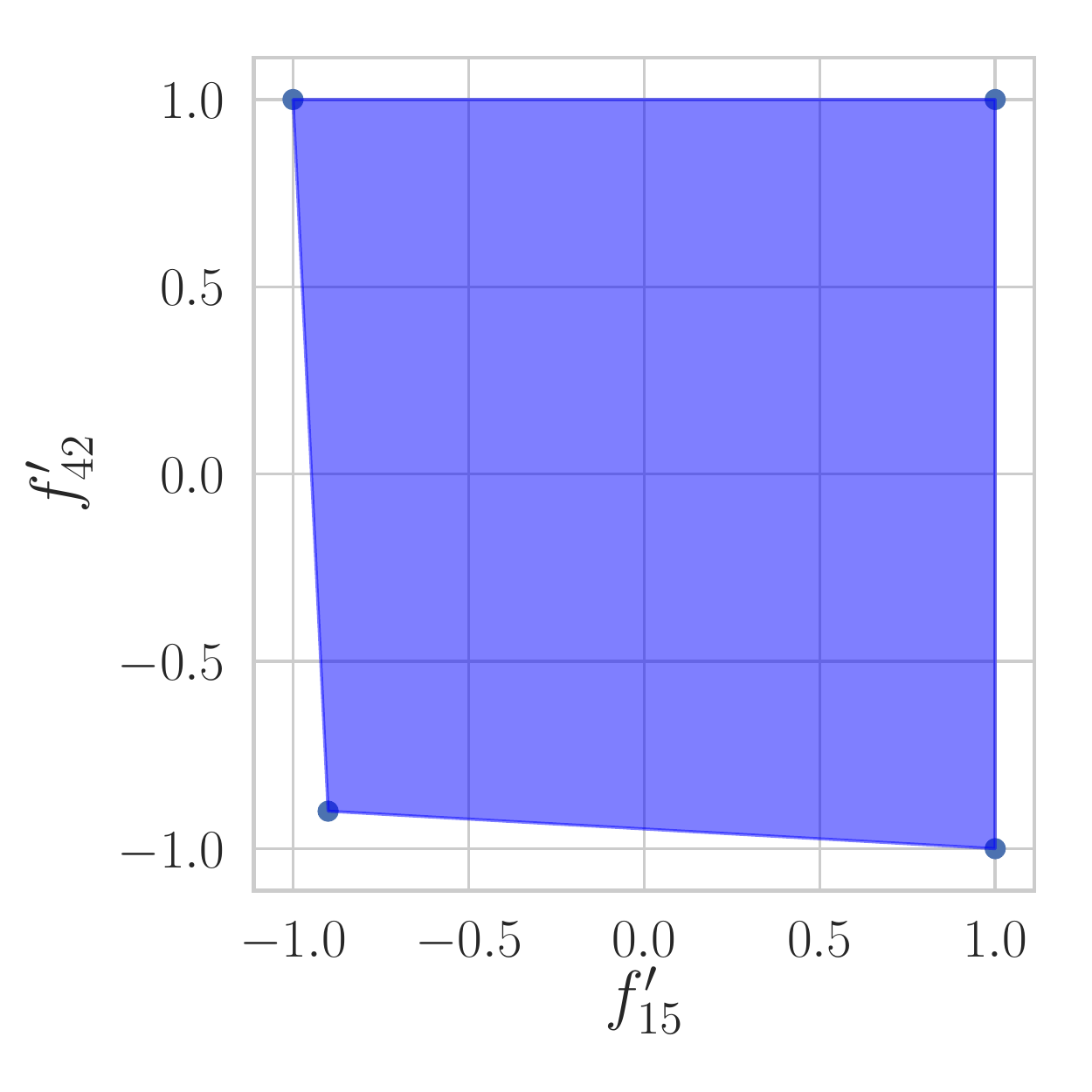}
	\end{minipage}
	\caption{\textbf{Left:} Consensus topology.
	\textbf{Right:} Polytope bounding $f_{15}'$ and $f_{42}'$.
	}
	\label{fig:cons_graph}
	\label{fig:cons_polytope}
\end{figure}

We complete the example by considering agent dynamics extended to second order networks of the form:
\begin{equation}
\label{eq:cons_nonlinear_two-state_standard} 
\dot{x}_i = v_i \,,\ 
\tau \dot{v}_i = - v_i + \sum_{j=1}^{N} f_{ij}(x_j-x_i)  \qquad 0 < i \leq N,
\end{equation}
where $\tau$ is a homogeneous time constant and $f_{ij}$ captures
the coupling between agents. We adopt the topology in Fig. \ref{fig:cons_graph}. 
For the second order case, consensus equilibria are given
by the subspace $(x,v)_e = (1_N, 0)$.

The necessary conditions in Propositions \ref{prop:spectral} and \ref{prop:geo} are not satisfied for $\tau \geq 1$.
A cone $\cK$ with $(1_N, 0) \in \cK$ was successfully found for $\tau = 0.3$.
Thus, the trajectories of \eqref{eq:cons_nonlinear_two-state_standard} with $\tau = 0.3$
asymptotically converge to consensus for any perturbed/nonlinear
$f_{15}$ and $f_{42}$ constrained to the intervals outlined above. An example trajectory with $\tau = 0.3$, $f_{15} = 0.9\sin(x_5-x_1)$, and $f_{42} = -0.9\sin(x_2-x_4)$ is shown in Fig. \ref{fig:cons_traj}.

\begin{figure}[tbp]
	\centering
	\includegraphics[width=\linewidth]{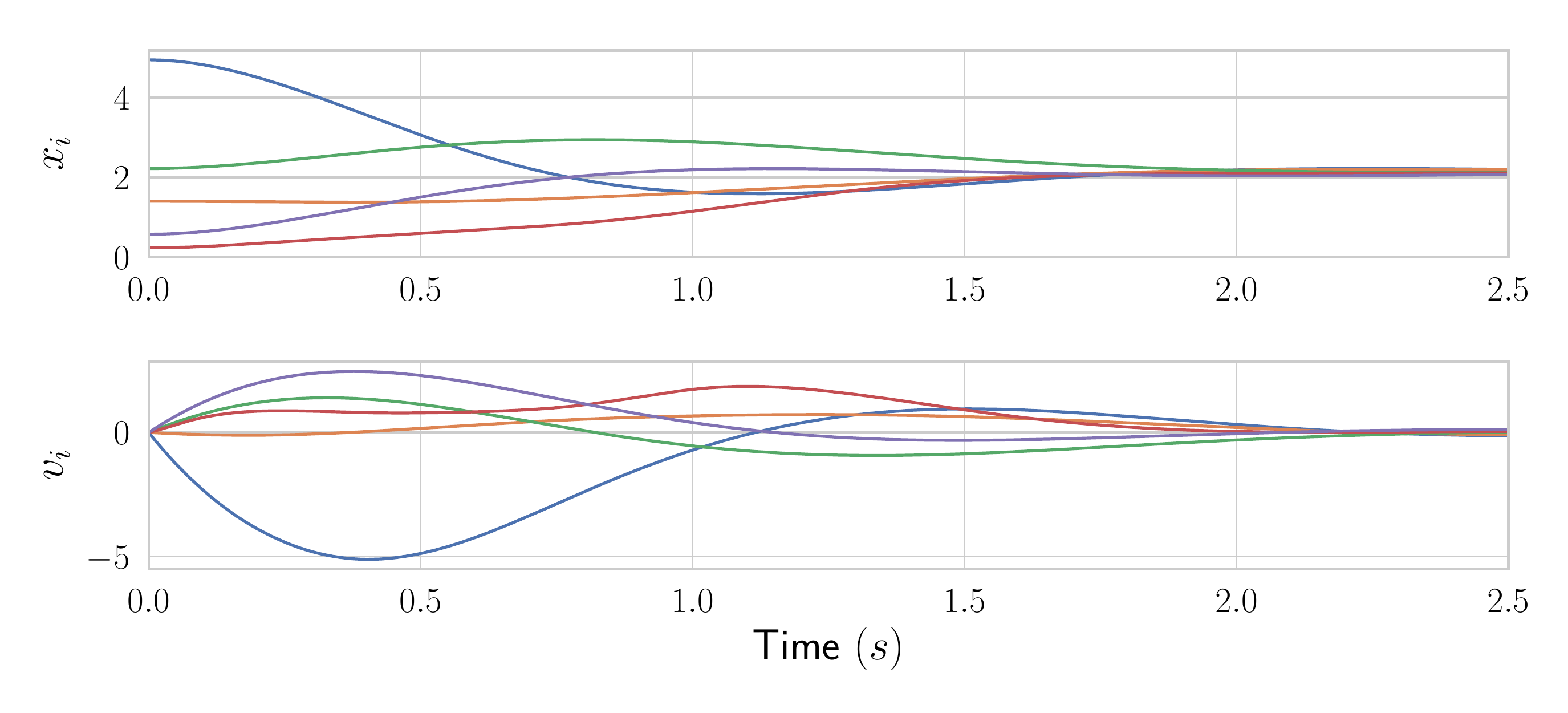}
	\caption{Sample trajectory for the second order consensus dynamics \eqref{eq:cons_nonlinear_two-state_standard} with $\tau = 0.3$, $f_{15} = 0.9\sin(x_5-x_1)$, and $f_{42} = -0.9\sin(x_2-x_4)$. Each color corresponds to an agent.}
	\label{fig:cons_traj}
\end{figure}

\subsection{Controlled Duffing Oscillator}
We study the three dimensional system given by:
\begin{equation}
\label{eq:spring}
\begin{split}
\dot{x_p} &= x_v \\
\dot{x_v} &= -\alpha(x_p) - c x_v + k_f x_i \\
L\dot{x_i} &= k_p (x_{ref} - x_p) -k_e x_v -R x_i
\end{split}
\end{equation}

This is based on the Duffing oscillator example in \cite{forni_differential_2018}.
The first two states capture the mechanics of the planar mechanical system and the DC motor inertia, while the third state is the electrical equation of the DC motor.
We use $\alpha(\cdot)$ to characterize a nonlinear spring.
Although simple, this system  captures a rich range of qualitative behaviors including mono-stability, bi-stability, and limit cycles.
We consider parameters $c = 5, k_f = 1, L = 0.1, k_e = 1, R = 1$.

Algorithm \ref{alg:sat} shows that \eqref{eq:spring} is strictly K-cooperative with respect to the cone shown in Fig. \ref{fig:spring_cone}
for $-2 \leq \alpha' \leq 5$ and $0 \leq k_p' \leq 3$.
This includes spring characteristics that can lead to bi-stability, as shown in Fig. \ref{fig:spring_traj} (right).

\begin{figure}[tbp]
	\centering
	\includegraphics[width=.49\linewidth]{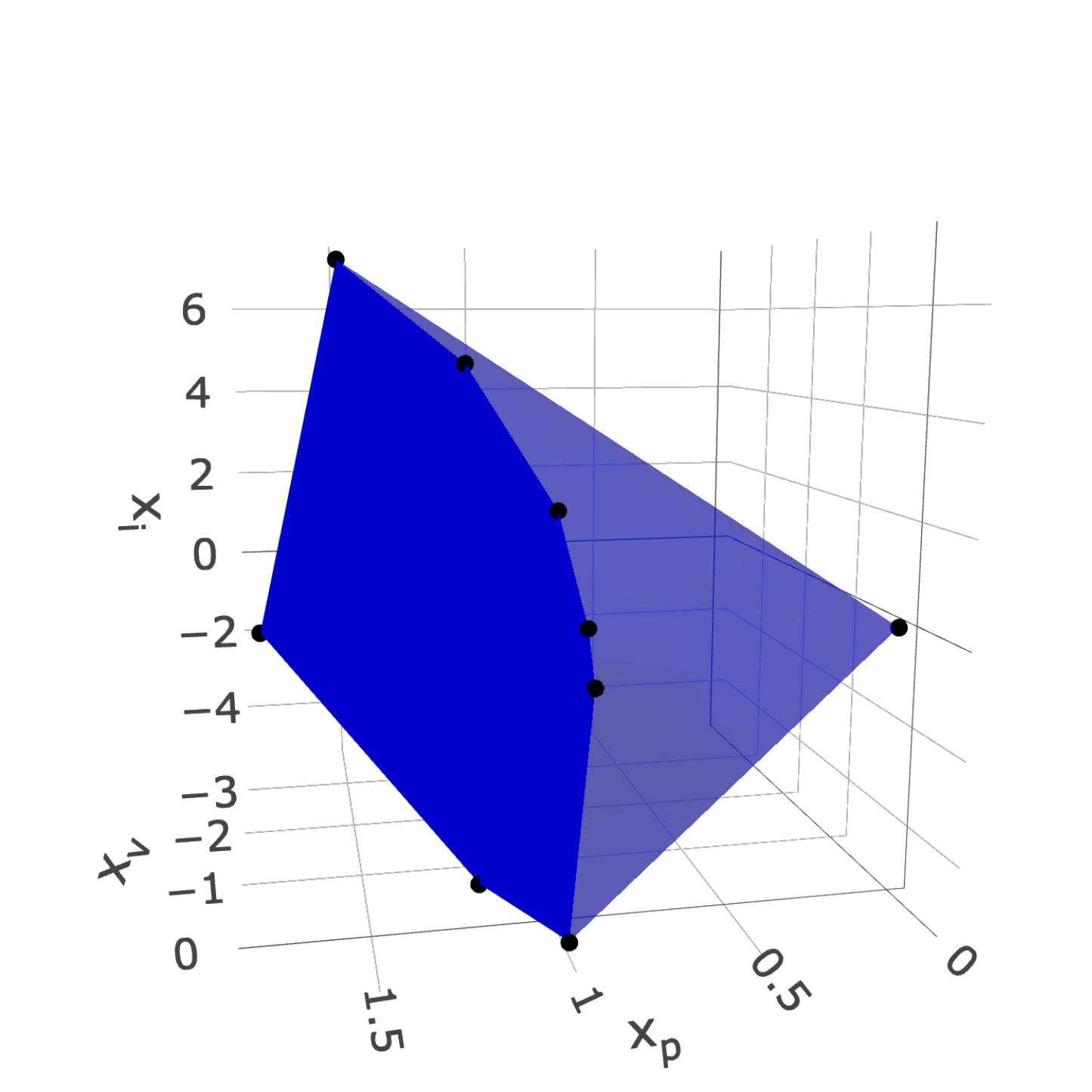}
	\includegraphics[width=.49\linewidth]{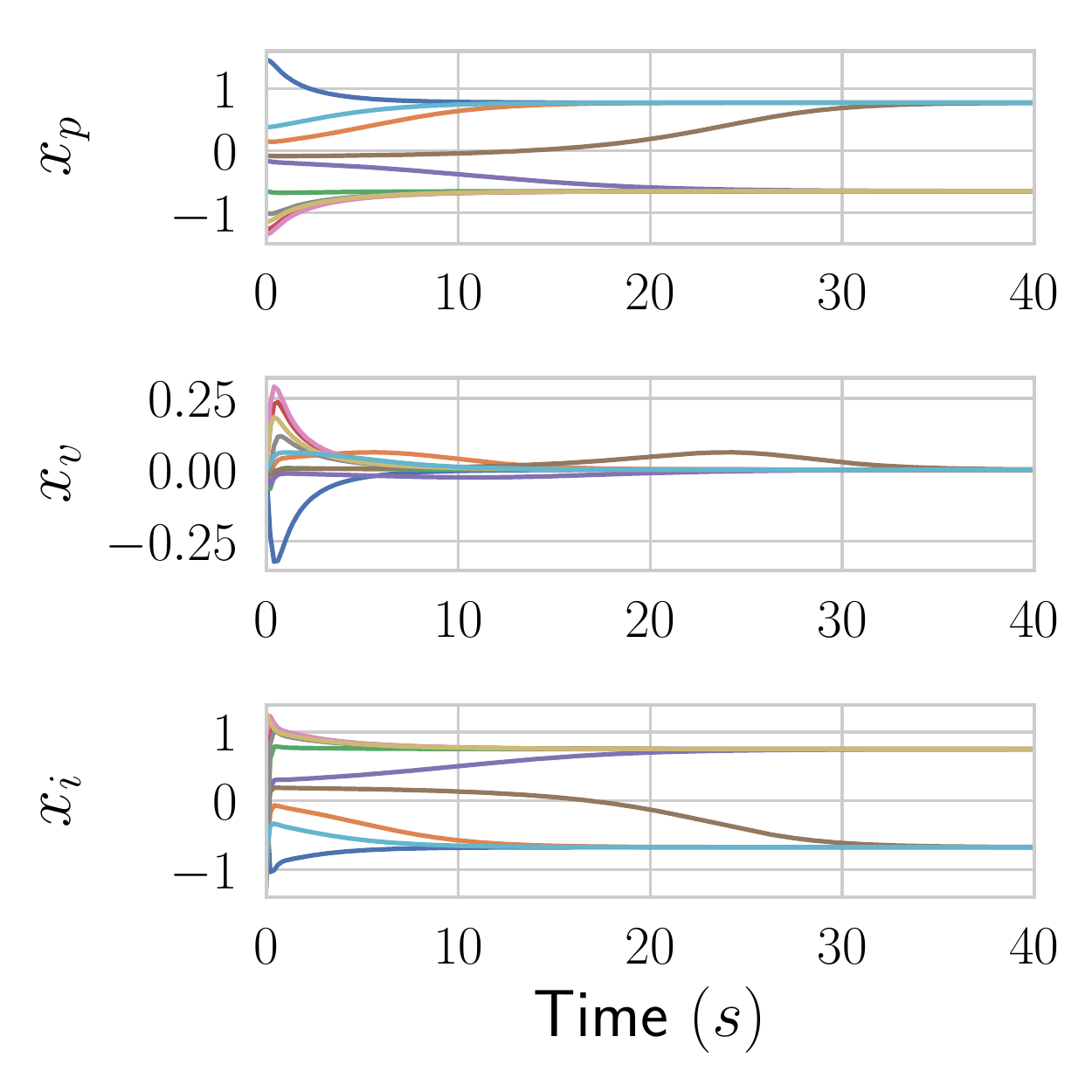}
	\caption{
	\textbf{Left:} Cone of \eqref{eq:spring}, $-2 \leq \alpha' \leq 5$, $0 \leq k_p' \leq 3$.
	\textbf{Right:} Random trajectories of \eqref{eq:spring} with $\alpha(x_p) = 5x_p-7\tanh(x_p)$ and $k_p(x_{ref}-x_p) = 0.1-x_p$. Each color corresponds to a trajectory.
	}
	\label{fig:spring_cone}
	\label{fig:spring_traj}
\end{figure}

Following the discussion on robustness at the end of Section \ref{sec:conical_approx}, the cone found can also be leveraged to explore K-cooperativity for different sets of parameters. 
For example, taking $c=8$, $0 \leq k_p' \leq 3$, and all other parameters as above, one can immediately show (using LP) that system \eqref{eq:spring} is strictly K-cooperative with respect to the same $\cKR(R)$ for the wider range $-2.6 < \alpha' < 6.1$.

%%%%%%%%%%%%%%%%%%%%%%%%%%%%%%%%%%%%%%%%%%%%%%%%%%%%%%%%%%%%%%%%%%%%%%%%%%%%%%%%
\section{CONCLUSIONS}

K-cooperativity combined with a cone finding algorithm
make the analysis of multi-stable nonlinear systems accessible.
The approach was illustrated on two examples, 
showing the potential of the theory in application. 
The performance of Algorithm \ref{alg:sat} strongly depends on the analyzed dynamics. 
For example, the cone found for \eqref{eq:cons_nonlinear} had 52 rays but the one for \eqref{eq:cons_nonlinear_two-state_standard} had 11738 when $\tau = 0.3$ (10 hours of computation), and a cone with 4278 rays was found when $\tau = 0.1$ (20 minutes of computation).
Generally, matrices with a small gap between their dominant eigenvalue and their complex eigenvalues (in the conical relaxation \eqref{eq:coni-cond}) lead to cones with a large number of rays.
Future work will focus on scaling up the approach to systems of large dimension.
Following \cite{forni_path-complete_2017}, we will also study the completeness of the algorithm and we will characterize conditions for its convergence.

%%%%%%%%%%%%%%%%%%%%%%%%%%%%%%%%%%%%%%%%%%%%%%%%%%%%%%%%%%%%%%%%%%%%%%%%%%%%%%%%
%\section{ACKNOWLEDGMENTS}

%%%%%%%%%%%%%%%%%%%%%%%%%%%%%%%%%%%%%%%%%%%%%%%%%%%%%%%%%%%%%%%%%%%%%%%%%%%%%%%%

%\bibliographystyle{ieeetr}
\bibliographystyle{abbrv}

\bibliography{19_CDC-all}

\end{document}